\newtheorem{thm}{Theorem}[section]
\newtheorem{lem}[thm]{Lemma}
\newtheorem{cor}[thm]{Corollary}
\newtheorem{prop}[thm]{Proposition}
\theoremstyle{remark}
\newtheorem{rem}[thm]{Remark}
\newtheorem*{rem*}{Remark}
\theoremstyle{definition}
\newtheorem{ex}[thm]{Example}
\numberwithin{equation}{section}
\newcommand{\C}{\mathbb{C}}
\begin{document}

\title{Multiplicity and the pull-back problem}

\author{Maciej P. Denkowski}\address{Jagiellonian University, Faculty of Mathematics and Computer Science, Institute of Mathematics, \L ojasiewicza 6, 30-348 Krak\'ow, Poland}\email{maciej.denkowski@uj.edu.pl}
\date{March 24th 2014}
\keywords{Finite maps, complex analytic sets, analytic intersection theory}
\subjclass{32B15, 14B05, 32A10}

\begin{abstract}
We discuss a formula of S. Spodzieja and generalize it for the isolated improper Achilles-Tworzewski-Winiarski intersection index. As an application we give a simple proof of a result of P. Ebenfelt and L. Rothschild: if $F\colon ({\C}^m,0)\to ({\C}^m,0)$ is a finite holomorphic map, $W$ a germ of a complex variety at zero such that $F^{-1}(W)$ is a smooth germ and the Jacobian of $F$ does not vanish identically on it, then $W$ is smooth too. 
\end{abstract}

\maketitle

\section{Introduction}

One of the main results of the article \cite{S} can be stated as follows:
\begin{thm}[Spodzieja]\label{Spodzieja}
If $f\colon D\to f(D)\subset {\C}^n$ is a holomorphic branched covering on a domain $D\subset{\C}^m$ with $f^{-1}(0)=\{0\}$, then 
$$
i(\Gamma_f\cdot(D\times\{0\}^n);0)=\mathrm{deg}_0 f(D)\cdot \tilde{m}_0(f)\leqno{(S)}
$$
where $\Gamma_f$ denotes the graph of $f$, $i(\Gamma_f\cdot(D\times\{0\}^n);0)$ is the Achilles-Tworzewski-Winiarski \textit{intersection index} \cite{ATW} at the origin, $\mathrm{deg}_0 f(D)$ stands for the \textit{local degree}, i.e. the usual Lelong number of $f(D)$ at zero, and $\tilde{m}_0(f)$ is a type of \textit{geometric multiplicity} of $f$.
\end{thm}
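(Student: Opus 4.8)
The plan is to exploit the fact that the second set, $H:=D\times\{0\}^n$, is just (the germ at $0$ of) the linear subspace $L:=\C^m\times\{0\}^n=\ker\rho$, where $\rho\colon\C^m\times\C^n\to\C^n$, $\rho(x,y)=y$. Since the Achilles--Tworzewski--Winiarski index depends only on germs at the origin and $D\times\{0\}^n$ is a neighbourhood of $0$ in $L$, we have $i(\Gamma_f\cdot H;0)=i(\Gamma_f\cdot L;0)$; so it suffices to compute the index of the isolated intersection $\Gamma_f\cap L=\{0\}$ of the $m$-dimensional germ $\Gamma_f$ with a linear subspace of codimension $n$. Under the biholomorphism $\pi_1|_{\Gamma_f}\colon\Gamma_f\to D$ the restriction $\rho|_{\Gamma_f}$ is exactly $f$, so $\rho|_{\Gamma_f}$ is a branched covering onto $f(D)$ with $(\rho|_{\Gamma_f})^{-1}(0)=\{0\}$ and geometric multiplicity $\tilde m_0(f)$.

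The intersection $\Gamma_f\cap L$ is proper exactly when $m=n$ and improper (generically empty, since $\dim\Gamma_f+\dim L=2m<m+n$) when $m<n$. In both cases I would restore a proper intersection by enlarging $L$ generically to complementary dimension: choose a generic linear projection $\ell\colon\C^n\to\C^m$ and put $\tilde L:=\ker(\ell\circ\rho)\supseteq L$, a linear subspace of dimension $n=N-\dim\Gamma_f$ with $N=m+n$. For generic $\ell$ one still has $\Gamma_f\cap\tilde L=\{0\}$, and now the intersection is proper. The first key step is the invariance of the index under such generic enlargement, namely $i(\Gamma_f\cdot L;0)=i(\Gamma_f\cdot\tilde L;0)$. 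This is where the definition and the foundational properties of the index from \cite{ATW} must be invoked, and it is the step I expect to be the main obstacle, since it is precisely the content that distinguishes the isolated improper index from a naive count of translated intersection points, which here would simply vanish.

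The second step is a computation with degrees of finite maps. For the proper isolated intersection of the $m$-dimensional germ $\Gamma_f$ with the codimension-$m$ linear subspace $\tilde L=\ker(\ell\circ\rho)$, the index equals the local mapping degree $\deg_0\big((\ell\circ\rho)|_{\Gamma_f}\big)=\deg_0(\ell\circ f)$, i.e. the number of points in a generic fibre of $\ell\circ f$ near the origin. I would then factor this covering degree through the intermediate variety: $\ell\circ f$ is the composition of the branched covering $f\colon D\to f(D)$, of degree $\tilde m_0(f)$, with the restriction $\ell|_{f(D)}\colon f(D)\to\C^m$. For generic $\ell$ the latter is a branched covering whose degree is, by the standard identification of the local degree of a generic linear projection with the Lelong number, exactly $\deg_0 f(D)$. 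Since covering degrees multiply under composition of finite maps, $\deg_0(\ell\circ f)=\deg_0 f(D)\cdot\tilde m_0(f)$.

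Combining the two steps yields $i(\Gamma_f\cdot(D\times\{0\}^n);0)=i(\Gamma_f\cdot\tilde L;0)=\deg_0(\ell\circ f)=\deg_0 f(D)\cdot\tilde m_0(f)$, which is $(S)$. The genericity required of $\ell$ is only that it separate the tangent cone of $f(D)$ at the origin (so that $\ell|_{f(D)}$ realises the Lelong number and $(\ell\circ f)^{-1}(0)=\{0\}$ near $0$) and that it be admissible for the enlargement property of the index; each condition fails only on a proper algebraic subset of the relevant Grassmannian, so one good $\ell$ may be fixed throughout. Note that when $m=n$ no enlargement is needed, $\tilde L=L$, $\deg_0 f(D)=1$, and the formula reduces to the classical statement $i=\deg_0 f=\tilde m_0(f)$, which is a useful consistency check.
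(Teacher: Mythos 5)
Your strategy is in substance the paper's own, specialized from its Theorem \ref{wzor} (which treats an arbitrary locally analytic $A$ with $p|_A$ proper and yields $(S)$ by taking $A=\Gamma_f$): enlarge $L=\C^m\times\{0\}^n$ to a linear space of complementary dimension, identify the resulting proper index with the covering number of $\ell\circ f$, and factor that number through $f(D)$ as $\mathrm{deg}_0 f(D)\cdot\tilde m_0(f)$. Your $\tilde L=\C^m\times\ker\ell$ is exactly the paper's $N=p^{-1}(0)\oplus T$ once coordinates are adapted so that the projection onto the first $m$ target coordinates realizes the degree, and your second step matches the paper's computation of $m_0(\pi\circ p|_A)$ --- though there the paper is more careful than you are: it invokes the Andreotti--Stoll theorem to put $\mathrm{Sng}\,f(D)$ and the critical locus into one nowhere dense analytic set whose image the generic fibre of $\ell$ avoids, and it uses Lemma \ref{wlokno} together with $f^{-1}(0)=\{0\}$ to justify your unproved assertion that $\tilde m_0(f)$ equals the covering number of $f$ over $\mathrm{Reg}\,f(D)$.

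The genuine gap is the step you yourself flag and then defer to unspecified ``foundational properties'': the invariance $i(\Gamma_f\cdot L;0)=i(\Gamma_f\cdot\tilde L;0)$. In \cite{ATW} this is not a generic-position axiom but Theorem 4.4, whose hypothesis is the concrete relative tangent cone condition $C_0(\Gamma_f,L)\cap\tilde L=L$ (in the paper's formulation, $T_0Y=T_0M\cap C_0(X,Y)$), and the missing idea is the computation that makes it checkable. Because $L$ is linear, one has $\Gamma_f-L=\Gamma_f+L=\C^m\times f(D)$, hence
$$
C_0(\Gamma_f,L)=C_0(\Gamma_f-L)=\C^m\times C_0\bigl(f(D)\bigr),
$$
so the hypothesis becomes $\C^m\times\bigl(C_0(f(D))\cap\ker\ell\bigr)=\C^m\times\{0\}^n$, i.e. $\ker\ell\cap C_0(f(D))=\{0\}$ --- which is precisely your first genericity requirement, that $\ell$ separate the tangent cone and so realize the Lelong number. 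In particular your two conditions on $\ell$ (``separates the tangent cone'' and ``admissible for the enlargement'') are one and the same, not two independently generic ones; without this identification your argument does not close, and with it added, it coincides with the proof of Theorem \ref{wzor} in the paper.
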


Note that $f$ being proper we necessarily have $n\geq m$. If $n>m$,then the isolated intersection $\Gamma_f\cap(D\times\{0\}^n)$ is not proper (i.e. the codimensions in $D\times{\C}^n$ does not add to $m+n$) and the intersection index cannot be computed along Draper \cite{Dr}; instead we use \cite{ATW}. Of course, if $n=m$, then $\mathrm{deg}_0 f(D)=1$ and the (usual) geometric multiplicity $m_0(f)$, which in this case is the generic number of points in the fibre, coincides with the intersection index. For branched coverings we refer the reader to \cite{Ch}. Proper intersection theory can be found in \cite{Dr} and \cite{Ch}.

The \textit{geometric multiplicity} is usually understood (e.g. see \cite{Ch}, \cite{L}) as
$$
m_0(f)=\limsup_{y\to 0}\#(f^{-1}(y)\cap U)
$$
where $U$ is a sufficiently small neighbourhood of zero. 

However, it should be pointed out that it is not this multiplicity that is used in the aforementioned 
Theorem \ref{Spodzieja} (that explains the tilda). Unfortunately, this may not be really apparent from the original article. Example \ref{ex} shows that the formula $(S)$ with $m_0(f)$ is indeed  erroneous. 
  Namely, the geometric multiplicity $m_0(f)$ is too big a number and should be replaced by 
$$
\tilde{m}_0(f)=\limsup_{\mathrm{Reg} f(D)\ni y\to 0}\#(f^{-1}(y)\cap U).
$$
Note that by the Remmert Proper Mapping Theorem, $f(D)$ is an analytic set. Since it is irreducible, $\mathrm{Reg}f(D)$ is connected and $f|_{f^{-1}(\mathrm{Reg}f(D))}$ is a branched covering over it, with a well-defined covering number. Since Theorem \ref{Spodzieja} is a very nice result on which many other papers are based (among them one of ours), it seems natural to stress the importance of the proper definition of the multiplicity used in it. 

In the next section we will prove a more general result, Theorem \ref{wzor}, that implies the theorem above, and we will apply it to the so-called `pull-back problem' (actually, this was precisely the question that triggered off our research and led to finding a counter-example to $(S)$ with $m_0(f)$ instead of $\tilde{m}_0(f)$). Namely, we will give a simple proof of the following theorem: 
\begin{thm}[Ebenfelt-Rothschild]\label{pullback}
 Let $F\colon({\C}^m, 0)\to ({\C}^m, 0)$ be a finite holomorphic mapping and $V\subset{\C}^m$ an analytic set germ at zero. Assume that   $V=F^{-1}(F(V))$ and $V$ is a smooth germ. If $\mathrm{Jac} F|_V\not\equiv 0$, then $F(V)$ is smooth too.
\end{thm}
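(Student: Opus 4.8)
The plan is to reduce the statement to a numerical identity between three multiplicities and then read off smoothness from a local-degree-one criterion. Write $W=F(V)$, set $k=\dim V$, and put $f=F|_V\colon (V,0)\to(W,0)$. Since $F$ is finite, so is $f$, and by the Remmert theorem $W$ is an analytic germ with $\dim W=\dim V=k$; being the image of the irreducible (smooth) germ $V$ it is irreducible, so $f$ is a branched covering onto $W$ with a well-defined covering number $\tilde m_0(f)$. I will use the classical fact that an analytic germ is smooth at a point precisely when its local degree (Lelong number) there equals $1$. Thus it suffices to prove $\deg_0 W=1$.

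First I would compute $\tilde m_0(f)$. Because $V=F^{-1}(W)$, every $F$-preimage of a point $y\in W$ already lies in $V$, so $F^{-1}(y)=f^{-1}(y)$ for $y$ near $0$. Let $C=\{\mathrm{Jac}\,F=0\}$. The hypothesis $\mathrm{Jac}\,F|_V\not\equiv 0$ means $V\not\subset C$, hence $V\cap C$ is nowhere dense in $V$ and, by finiteness, its image $f(V\cap C)$ is nowhere dense in $W$. So for a generic $y\in\mathrm{Reg}\,W$ close to $0$ the points of $f^{-1}(y)$ all avoid $C$; at each of them $F$ is a local biholomorphism and the local multiplicity of $F$ is $1$. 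Since the total multiplicity of a fibre of the finite map $F$ is constant and equal to $m_0(F)$, counting gives $\#f^{-1}(y)=m_0(F)$, that is $\tilde m_0(f)=m_0(F)$.

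Next I would bring in the intersection index. On one hand the mapping multiplicity is the proper index $m_0(F)=i(\Gamma_F\cdot(\C^m\times\{0\}^m);0)$. On the other hand, applying Theorem \ref{wzor} to $F$ along the smooth germ $V$ yields the isolated improper index $i(\Gamma_F\cdot(V\times\{0\}^m);0)=\deg_0 W\cdot\tilde m_0(f)$; here one uses that $\Gamma_F$ meets $V\times\C^m$ properly and with multiplicity one along $\Gamma_f$, so that the computation reduces to the graph of $f$ exactly as in Theorem \ref{Spodzieja}. The crux is to identify these two indices, namely to show that shrinking the horizontal plane $\C^m\times\{0\}^m$ to the smaller $V\times\{0\}^m$ leaves the index unchanged:
$$i(\Gamma_F\cdot(V\times\{0\}^m);0)=i(\Gamma_F\cdot(\C^m\times\{0\}^m);0)=m_0(F).$$
Granting this, combining with the fibre count gives $m_0(F)=\deg_0 W\cdot m_0(F)$, whence $\deg_0 W=1$ and $W$ is smooth.

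The hard part is precisely this identification of indices, and it is where $\mathrm{Jac}\,F|_V\not\equiv 0$ is indispensable. Intuitively, the full mapping multiplicity of $F$ is carried by the improper intersection with $V\times\{0\}^m$ only when the ramification of $F$ is not transverse to $V$; the Jacobian condition guarantees that $F$ restricts to a generic submersion of $V$ onto $W$, which is what makes the $m-k$ normal equations of $V$ redundant modulo the components of $F$ and forces the two indices to agree. When the Jacobian vanishes identically on $V$ this mechanism fails: for $F(x,y)=(x^2,y)$ and $V=\{x=0\}$ one has $m_0(F)=2$ but $i(\Gamma_F\cdot(V\times\{0\}^2);0)=1$, so the reduction to $\deg_0 W=1$ collapses — consistently with the hypothesis of Theorem \ref{pullback}.
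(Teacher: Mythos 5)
Your first step is correct, and it is in fact a neat compression of two results the paper proves separately: using $V=F^{-1}(W)$ together with Stoll's fibre formula $(St)$ over the smooth base $U$, you deduce directly that $\mathrm{Jac}\,F|_V\not\equiv 0$ forces $\tilde{m}_0(f)=m_0(F)$, whereas the paper splits this into the Proposition $m_0(F)=\tilde{m}_0(F|_V)\cdot m_V(F)$ and the final Lemma $m_V(F)=1\Leftrightarrow \mathrm{Jac}\,F|_V\not\equiv 0$ (your count is fine, modulo the routine use of Lemma \ref{wlokno} to keep the fibres $f^{-1}(y)$ inside the neighbourhood $U$ in the definition of $\tilde{m}_0$). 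The application of Theorem \ref{wzor} to the graph of $f=F|_V$, where smoothness of $V$ lets one treat $f$ as a map on an open subset of ${\C}^k$, is also as in the paper.

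However, the step you yourself call the crux --- the identity $i(\Gamma_F\cdot(V\times\{0\}^m);0)=i(\Gamma_F\cdot({\C}^m\times\{0\}^m);0)=m_0(F)$ --- is a genuine gap, not a missing detail: you offer only a heuristic about the normal equations of $V$ becoming redundant, and given your two previous steps this equality reads $m_0(F)=\mathrm{deg}_0 W\cdot m_0(F)$, i.e.\ it is \emph{logically equivalent} to the conclusion $\mathrm{deg}_0 W=1$. So the proposal defers the entire content of the theorem to an unproven claim; your own example $F(x,y)=(x^2,y)$, $V=\{x=0\}$ shows the identity fails without the Jacobian hypothesis, so any proof of it must re-use that hypothesis, which you have already spent in computing $\tilde{m}_0(f)$. (A secondary unjustified point: the reduction of $i(\Gamma_F\cdot(V\times\{0\}^m);0)$ to the graph $\Gamma_f$ via ``proper intersection with multiplicity one along $\Gamma_f$'' is not among the tools of \cite{ATW} as cited.) The paper's way out is to give up on equality altogether: since $\Gamma_f\subset\Gamma_F$ and $\Gamma_f$ is \emph{smooth} --- this is where smoothness of $V$ is essential --- Proposition 5.3 of \cite{ATW} gives the one-sided estimate $m_0(F)=i(\Gamma_F\cdot(D\times\{0\}^m);0)\geq i(\Gamma_f\cdot(D\times\{0\}^m);0)=\mathrm{deg}_0 W\cdot\tilde{m}_0(f)$, which combined with your $\tilde{m}_0(f)=m_0(F)$ yields $\mathrm{deg}_0 W\leq 1$, hence $\mathrm{deg}_0 W=1$. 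Replacing your unproven identity by this monotonicity inequality closes the gap and shows that only the inequality, never the equality, is needed.
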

Actually, this result is a by-product of the main theorem of \cite{ER} concerning images of real-analytic submanifolds by holomorphic finite mappings. However, the question whether one can omit the assumption on the Jacobian in the theorem above is stated there (Remark 2.2 and Question starting section 4 in \cite{ER}) and is obviously of interest and remains open. In \cite{ER} it is proved that the assumption on the Jacobian can be dropped in the case of a curve. 

This question was the starting point of another article, very nice indeed, \cite{L} by J. Lebl where among others the theorem of Ebenfelt and Rothschild for $\dim V=1$ (without the assumption on the Jacobian) is obtained in a simpler way. Actually, the author tackles a more general problem, i.e. he asks which properties of a germ $W\subset{\C}^m$ at zero are inherited from the pull-back $V:=F^{-1}(W)$ (note that $W=F(V)$, $F$ being the germ of a branched covering). The simplest example is, of course, that of irreducibility. A somewhat less obvious one is of normality, and Lebl gives a short and elegant proof. But it is smoothness which is the central subject. For curves normality is equivalent to smoothness. Many different instances of the theorem are discussed in a most accessible way, and the result is proved without the assumption on the Jacobian in some special cases (such as when the multiplicity of $F$ is a prime number). 

It is quite natural to expect that the theorem of Ebenfelt and Rothschild should hold true without any assumption on the Jacobian. 

The important thing here that makes things work in the Ebenfelt-Rothschild result is that $V$ is a `plain' pull-back of $W$. It is easy to find a finite polynomial mapping $F\colon{\C}^m\to{\C}^m$ and an algebraic smooth set $V$ such that $F(V)$ is algebraic and singular (but in this case $V\subsetneq F^{-1}(F(V))$). To that purpose the following example was devised with Carlo Perrone:
\begin{ex}
Let $F(x,y)=(x^2,y)$ and $W=\{u^3=v^2\}$ with a singularity at zero. Then $F^{-1}(W)=\{x^6=y^2\}$ is a reducible curve ($\Gamma_\pm:=\{(\pm t, t^3), t\in{\C}\}$ are two different components of $F^{-1}(W)$ depending on the choice of the sign). Let $V:=\Gamma_+$. It is a non-singular curve and clearly $F(V)=W$. 
\end{ex}

We have the feeling that our approach, using intersection theory, should shed a new light on the pull-back problem.

\noindent\textbf{Notation.}
We denote by $\mathrm{Reg} A$ the regular part of an analytic set $A$ and we put $\mathrm{Sng} A:=A\setminus\mathrm{Reg} A$. 

In order to shorten notation, we will write $$i_0(f):=i(\Gamma_f\cdot (D\times\{0\}^n);0).$$

\section{A general intersection formula for proper projections}

We start this section by giving a counter-example to formula $(S)$ with $m_0(f)$ instead of $\tilde{m}_0(f)$.
\begin{ex}\label{ex}
Consider the set $X=\{(x,y,t)\in{\C}^3\mid y^2=x^2(x+t^2)\}$. It is a family in $t$ of globally irreducible algebraic curves, each of which can be normalized by the parametrization $s\mapsto (s^2-t^2,s(s^2-t^2))$. Consider the graph of the resulting holomorphic function $g(s,t)$ written as
$$
A:=\{(s, s^2-t^2,s(s^2-t^2),t)\mid (s,t)\in{\C}^2\}\subset{\C}^4
$$
together with the projection $\pi(z,x,y,t)=(x,y,t)$. Then $\pi(A)=X$ and it is easy to check that $\pi|_A$ is proper. 

Clearly, $\pi^{-1}(0,0,0)\cap A=\{(0,0,0,0)\}$ and by composing $\pi$ with the natural parametrization $\gamma(s,t)=(s,g(s,t),t)$ of $A$ we obtain a holomorphic proper function $f=\pi\circ\gamma\colon {\C}^2\to{\C}^3$ with $f^{-1}(0)=\{0\}$, and $f({\C}^2)=X$. We have
$$
f(s,t)=(s^2-t^2,s(s^2-t^2),t).
$$

Now, $X$ being a hypersurface, we easily check that the singular part $\mathrm{Sng} X$ is the $t$-axis, as it is given by the equations $F=0,\nabla F=0$ for $F(x,y,t)=y^2-x^2(x+t^2)$ forming a nowherdense subset of $X$ (\footnote{cf. Tsikh's result in \cite{Ch}.}). Moreover, $F$ is a minimal defining function for $X$, and so $\mathrm{deg}_0 X=\mathrm{ord}_0 F=2$ (\footnote{Otherwise, we may simply say that the tangent cone $C_0(X)$ is described by the initial form of the expansion of $F$, so it is the $(x,t)$-plane. The projection on this plane realizes the degree at zero and it is clearly two-sheeted.}). 

Over the regular points $\mathrm{Reg} X$, the mapping $f$ is one-to-one, but $f$ as a covering $f^{-1}(\mathrm{Sng} X)\to \mathrm{Sng} X$ is two-sheeted. Indeed, it is easy to see that $f^{-1}(\mathrm{Sng} X)=\{(s,\pm s)\mid s\in{\C}^2\}$ and $f(s,\pm s)=(0,0,\pm s)$ is a double cover of the $t$-axis.

Therefore, $m_0(f)=2$, whereas $\tilde{m}_0(f)=1$. In order to compute $i_0(f)$ we need only (thanks to the beautiful result of \cite{S}, independent of the formula $(S)$) to compose $f$ with a general linear projection (realizing $\mathrm{deg}_0X$), say $p(x,y,t)=(x,t)$, and compute $m_0(p\circ f)$. Since 
$$
(p\circ f)(s,t)=(s^2-t^2,t),
$$
if we take a point $(u,v)$ with $u\neq 0$, from the equation $(p\circ f)(s,t)=(u,v)$ we obtain $t=v$ and $s$ is the solution of $s^2=u+v^2$. Hence $i_0(f)=m_0(p\circ f)=2$ and eventually
$$
i_0(f)=2=\tilde{m}_0(f)\cdot\mathrm{deg}_0 f({\C}^2)\neq {m}_0(f)\cdot\mathrm{deg}_0 f({\C}^2)=4.
$$
\end{ex}

Below, we will give a generalized version of $(S)$.

We recall that the \textit{relative tangent cone} $C_a(A,B)$ of two locally analytic sets $A,B\subset{\C}^M$ at an isolated intersection point $a\in A\cap B$ is defined in \cite{ATW} and coincides with the Peano-Whitney tangent cone $C_0(A-B)$ where $A-B$ is the algebraic difference of $A$ and $B$. It is an algebraic cone of dimension $\dim A+\dim B$ in case $A$ and $B$ are pure dimensional; moreover, if the isolated intersection $A\cap B$ is transverse at $a$, i.e. $C_a(A)\cap C_a(B)=\{0\}$, then $C_a(A,B)=C_a(A)+C_a(B)$ (see \cite{ATW}).

We briefly recall one way of computing the isolated improper intersection index. By the results of \cite{ATW}, if $X,Y$ are locally analytic sets in ${\C}^N$, $X\cap Y=\{0\}$, the intersection is not proper (i.e. $\dim X+\dim Y<N$) and $Y$ is smooth, then the improper intersection index $i(X\cdot Y;0)$ is equal to the proper one $i(X\cdot M;0)$, where $M\supset Y$ is analytic and smooth, $M\cap X=\{0\}$ and $\dim M=N-\dim X$, provided that 
$$
T_0 Y=T_0 M\cap C_0(X,Y).
$$

If $A$ is a pure $k$-dimensional analytic subset of some open set in ${\C}^m\times{\C}^n$ and the projection $p\colon {\C}^m\times{\C}^n\to{\C}^n$ is proper on $A$, then it has a well defined multiplicity as a branched covering over the connected manifold $\mathrm{Reg} X$ for each irreducible component $X\subset p(A)$ ($p(A)$ is analytic by the Remmert Proper Mapping Theorem). We may assume for simplicity that $p(A)$ is irreducible. Then for any $x\in p^{-1}(y)$ where $y\in\mathrm{Sng}p(A)$ we find a neighbourhood $U$ such that $p^{-1}(y)\cap A\cap\overline{U}=\{x\}$ and we define the \textit{regular multiplicity at} $x$ by
$$
\tilde{m}_x(p|_A):=\limsup_{\mathrm{Reg} p(A)\ni z\to y} \#p^{-1}(z)\cap A\cap U.
$$
This is independent of the choice of $U$ and makes sense also at any points in the fibres over $\mathrm{Reg} p(A)$. By the way, it is a classical result of W. Stoll that for any $y\in\mathrm{Reg} p(A)$ there is $$\sum_{x\in p^{-1}(y)\cap A} m_x(p|_A)=m(p|_{A\setminus p^{-1}(\mathrm{Sng} p(A))})\leqno{(St)}$$
where the latter denotes the multiplicity of $p|_{A\setminus p^{-1}(\mathrm{Sng} p(A))}$ as a branched covering over $\mathrm{Reg} p(A)$. 

However, there is no simple relation for the multiplicities above $\mathrm{Sng}p(A)$. In particular, the number of points in the generic fibre of $p|_A$ may exceed the covering mutiplicity over $\mathrm{Reg} p(A)$. Hence, $$\tilde{m}_x(p|_A)\leq m_x(p|_A)$$ 
with equality at least when $k=n$. 
We will need hereafter a the following simple but useful observation.
\begin{lem}\label{wlokno}
Let $D\subset{\C}^n$ be an open set containing zero, $A\subset {\C}^m\times D$ a pure $k$-dimensional analytic set with proper projection $\pi\colon {\C}^m\times D\to D$ and assume that $\pi^{-1}(0)\cap A=\{0\}$. Then for any neighbourhood $U$ of $0\in A$ we are able to find a neighbourhood $V$ of $0\in \pi(A)$ such that for any $y\in U\setminus\Sigma$, $\pi^{-1}(y)\subset U$.
\end{lem}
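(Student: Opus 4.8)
The plan is to deduce the statement from the single fact that a proper map is closed, applied to the complement of $U$. I first fix the (corrected) reading: for every open neighbourhood $U$ of $0$ in ${\C}^m\times D$ I want a neighbourhood $V$ of $0$ in $\pi(A)$ such that $\pi^{-1}(y)\cap A\subset U$ for all $y\in V$. There is no loss in taking $U$ open, and I would begin by considering the set $A\setminus U$. Since $A$ is analytic, hence closed in ${\C}^m\times D$, and $U$ is open, the set $A\setminus U$ is closed in ${\C}^m\times D$, and in particular it is a closed subset of $A$.

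Next I would invoke properness. As $\pi|_A\colon A\to D$ is proper and $D$ is locally compact and Hausdorff, the map $\pi|_A$ is closed; consequently $\pi(A\setminus U)$ is a closed subset of $D$ (equivalently of $\pi(A)$). The key observation is then that $0\notin\pi(A\setminus U)$: if we had $0=\pi(a)$ for some $a\in A\setminus U$, then $a\in\pi^{-1}(0)\cap A=\{0\}$, so $a=0\in U$, a contradiction. It therefore suffices to set $V:=\pi(A)\setminus\pi(A\setminus U)$, which is an open neighbourhood of $0$ in $\pi(A)$, and by construction no point of $\pi^{-1}(y)\cap A$ with $y\in V$ can lie outside $U$, i.e. $\pi^{-1}(y)\cap A\subset U$. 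This is exactly the assertion.

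If one prefers to avoid citing the closedness of proper maps as a black box, the same conclusion follows by a routine compactness argument, which is perhaps more in the spirit of the analytic-geometry setting. Assuming the contrary, one obtains a sequence $y_\nu\to 0$ and points $a_\nu\in(\pi^{-1}(y_\nu)\cap A)\setminus U$. Since the $y_\nu$ eventually lie in a fixed compact neighbourhood $K$ of $0$ in $D$, properness forces the $a_\nu$ into the compact set $\pi^{-1}(K)\cap A$, so after passing to a subsequence $a_\nu\to a\in A$ with $\pi(a)=0$; as $A\setminus U$ is closed, $a\notin U$. But then $a\in\pi^{-1}(0)\cap A=\{0\}$, whence $a=0\in U$, a contradiction.

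I do not expect any genuine obstacle here: the lemma is the standard upper semicontinuity of fibres for proper maps, and the only point requiring care is the passage from properness to the closedness of $\pi(A\setminus U)$ — equivalently, the extraction of a convergent subsequence. This is precisely where the hypothesis $\pi^{-1}(0)\cap A=\{0\}$ enters, pinning the limit at the origin and thereby producing the contradiction with $0\in U$.
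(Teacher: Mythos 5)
Your proposal is correct, and your compactness fallback is essentially verbatim the paper's own proof: assume the contrary, take $y_\nu\to 0$ and $x_\nu\in A\setminus U$ over them, use properness of $\pi|_A$ to extract a convergent subsequence, and note the limit lies in $\pi^{-1}(0)\cap A=\{0\}$ yet outside the open set $U$ --- a contradiction. Your primary argument via the closedness of proper maps (taking $V:=\pi(A)\setminus\pi(A\setminus U)$) is merely a tidier packaging of the same properness fact, and you were right to read the statement's garbled conclusion as ``for all $y\in V$, $\pi^{-1}(y)\cap A\subset U$'', which is clearly what the paper intends and uses.
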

\begin{proof}
Indeed, if it were not the case, then for some $U$ we would obtain a sequence of points $\pi(A)\ni y_\nu\to 0$ and a sequence of points $x_\nu\in A\setminus U$ with $\pi(x_\nu)=y_\nu$. But then, as $(x_\nu)$ is contained in the compact set $\pi^{-1}(\{0\}\cup\bigcup_\nu\{y_\nu\})\cap A$, we would find a convergent subsequence $x_{\nu_\mu}\to x_0\in A\setminus\{0\}$. Then, necessarily, $p(x_0)=0$ which is a contradiction.
\end{proof}

Here is our generalization of Theorem \ref{Spodzieja}:

\begin{thm}\label{wzor}
Let $p\colon {\C}^m\times{\C}^n\ni (x,y)\mapsto y\in{\C}^n$ and let $A\subset {\C}^m\times{\C}^n$ be an irreducible $k$-dimensional locally analytic set. Assume that $p^{-1}(0)\cap A=\{0\}$. Then
$$
i(p^{-1}(0)\cdot A;0)=\tilde{m}_0(p|_A)\cdot \mathrm{deg}_0p(A).
$$
\end{thm}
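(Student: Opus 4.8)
The plan is to reduce the isolated improper intersection to a proper one by the $(ATW)$-reduction recalled above, and then to evaluate the resulting proper index by a generic deformation together with a fibrewise count. Throughout put $N:=m+n$ and $Y:=p^{-1}(0)=\C^m\times\{0\}^n$, a smooth linear subspace of dimension $m$. Since $p^{-1}(0)\cap A=\{0\}$, the projection $p|_A$ is proper (finite) near $0$, so by the Remmert Proper Mapping Theorem $p(A)$ is an irreducible analytic germ, and, $p|_A$ being finite, $\dim p(A)=\dim A=k$. The first step is to pick a linear subspace $L\subset\C^n$ of dimension $n-k$ that is generic with respect to $p(A)$, i.e. $L\cap C_0(p(A))=\{0\}$. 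Such an $L$ meets $p(A)$ only at $0$ near the origin and realizes the local degree, $i(L\cdot p(A);0)=\mathrm{deg}_0 p(A)$ (the local degree is the multiplicity of a generic linear section of complementary dimension, see \cite{Ch}).

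Set $M:=\C^m\times L$. This is a smooth subspace with $Y\subset M$, $\dim M=m+(n-k)=N-\dim A$, and $M\cap A=\{0\}$: a point $(x,y)\in A$ lies in $M$ iff $y\in L\cap p(A)=\{0\}$, whence $(x,y)\in A\cap Y=\{0\}$. The crux of this stage is the computation of the relative tangent cone, for which I claim
$$
C_0(A,Y)=\C^m\times C_0(p(A)).
$$
This is seen by inspecting secants $t_\nu(a_\nu-b_\nu)$ with $a_\nu=(x_\nu,y_\nu)\in A$ and $b_\nu=(w_\nu,0)\in Y$ tending to $0$: the last $n$ coordinates equal $t_\nu y_\nu$ with $y_\nu=p(a_\nu)\in p(A)$, producing exactly $C_0(p(A))$, while the freedom in choosing $w_\nu\to 0$ makes the first $m$ coordinates range over all of $\C^m$. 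Granting this, $T_0M\cap C_0(A,Y)=\C^m\times\bigl(L\cap C_0(p(A))\bigr)=\C^m\times\{0\}=T_0Y$, so all hypotheses of the $(ATW)$-reduction hold and
$$
i(Y\cdot A;0)=i(A\cdot M;0),
$$
the right-hand intersection now being proper since $\dim A+\dim M=N$.

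It remains to evaluate the proper index $i(A\cdot M;0)$. Writing $\ell_1,\dots,\ell_k$ for linear forms on $\C^n$ cutting out $L$, the set $M$ is the complete intersection $\{\ell_1\circ p=\dots=\ell_k\circ p=0\}$, so $i(A\cdot M;0)$ equals the multiplicity at $0$ of the finite map $A\ni a\mapsto(\ell_j(p(a)))_j\in\C^k$, hence the number $\#\bigl(A\cap p^{-1}(L+v)\cap U\bigr)$ of solutions on a small neighbourhood $U$ of $0$ for generic small $v\in\C^n$. Now
$$
A\cap p^{-1}(L+v)\cap U=\bigcup_{z\in(L+v)\cap p(A)\cap V}\bigl(p^{-1}(z)\cap A\cap U\bigr),
$$
where $V$ is the base neighbourhood furnished by Lemma \ref{wlokno}. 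For generic $v$ the germ $(L+v)\cap p(A)$ consists near $0$ of exactly $i(L\cdot p(A);0)=\mathrm{deg}_0 p(A)$ points, all lying in $\mathrm{Reg}\,p(A)$ (a generic $(n-k)$-plane misses the lower-dimensional $\mathrm{Sng}\,p(A)$ near $0$). Over each such regular point $z$ the fibre $p^{-1}(z)\cap A$ is contained in $U$ by Lemma \ref{wlokno} and has cardinality equal to the covering number of $p|_A$ over $\mathrm{Reg}\,p(A)$, which is $\tilde m_0(p|_A)$ by its very definition. Since the fibres over distinct $z$ are disjoint, summing yields $i(A\cdot M;0)=\tilde m_0(p|_A)\cdot\mathrm{deg}_0 p(A)$, which is the assertion.

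The step I expect to be the main obstacle is the first one: establishing $C_0(A,Y)=\C^m\times C_0(p(A))$ with enough rigour to invoke the $(ATW)$-reduction, and verifying that a \emph{single} generic choice of $L$ simultaneously realizes the degree, meets both $p(A)$ and $C_0(p(A))$ only at $0$, and has generic translates avoiding $\mathrm{Sng}\,p(A)$ near the origin. Once the regularity of the deformed section points $z$ is secured, the concluding count is a direct bookkeeping with the covering number, consistent with Stoll's formula $(St)$.
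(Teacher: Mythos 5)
Your proposal is correct and follows essentially the same route as the paper's own proof: your $M=\C^m\times L$ is precisely the paper's $N=p^{-1}(0)\oplus T$ after a linear change of coordinates making the projection onto the first $k$ coordinates realize $\mathrm{deg}_0 p(A)$; your secant computation of $C_0(A,Y)=\C^m\times C_0(p(A))$ is the paper's algebraic-difference identity $A-p^{-1}(0)=A+p^{-1}(0)=\C^m\times p(A)$ (valid exactly because $p^{-1}(0)$ is linear); and your evaluation of $i(A\cdot M;0)$ by counting $A\cap p^{-1}(L+v)$ for generic small $v$ is the paper's computation of $m_0(\pi\circ p|_A)$ fibre by fibre.

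One step needs tightening. In the final count you arrange only that the section points $z\in(L+v)\cap p(A)$ lie in $\mathrm{Reg}\,p(A)$, but a regular point of $p(A)$ can still be a critical value of the branched covering $p|_{A\setminus p^{-1}(\mathrm{Sng}\,p(A))}$, and over such a $z$ the fibre $p^{-1}(z)\cap A$ has \emph{strictly fewer} than $\tilde{m}_0(p|_A)$ points, so your claim that each fibre ``has cardinality equal to the covering number'' fails as stated; the covering number is attained only off the critical locus $\sigma$. The paper handles this via the Andreotti--Stoll theorem (\cite{Loj} V 7.2): there is a nowheredense analytic set $\Sigma\subset p(A)$ containing both $\mathrm{Sng}\,p(A)$ and $\sigma$, and since $\dim\Sigma<k$ its image under the projection along $L$ is nowheredense, so generic translates $L+v$ avoid $\Sigma$ entirely. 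Your genericity argument extends verbatim once $\sigma$ is included alongside $\mathrm{Sng}\,p(A)$, so this is a local repair rather than a flaw in the approach; the same remark disposes of your closing worry about a single $L$ serving all purposes simultaneously.
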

\begin{proof}
By the assumption we may suppose that $A$ is an analytic subset of ${\C}^m\times D$ where $D\subset{\C}^n$ is a domain and the projection $p|_A$ is proper. Thus we have an irreducible $k$-dimensional analytic set $p(A)\subset D$ by the Remmert Proper Mapping Theorem. We may assume that the coordinates in ${\C}^n$ are chosen in such a way that the natural projection $\pi$ onto the first $k$ coordinates realizes the degree $\mathrm{deg}_0 p(A)$. Write $$T:=\{0\}^m\times\{0\}^k\times{\C}^{n-k}.$$



Let $\sigma\subset\mathrm{Reg} p(A)$ be the critical locus of the branched covering $p|_{A\setminus p^{-1}(\mathrm{Sng} A)}$ defined over the connected manifold $\mathrm{Reg} p(A)$; it has dimension strictly smaller than $k$. Actually, by The Andreotti-Stoll Theorem \cite{Loj} V 7.2, there is a nowheredense analytic subset $\Sigma\subset p(A)$ containing both $\mathrm{Sng} p(A)$ and $\sigma$. 

Since $0$ is the unique point in its fibre, we easily conclude that $\tilde{m}_0(p|_A)$ coincides with the covering number of $p$ over $\mathrm{Reg} p(A)$ (it suffices to use Lemma \ref{wlokno}).

Observe that we are dealing with a possibly improper isolated intersection $p^{-1}(0)\cap A=\{0\}$. In order to compute the intersection multiplicity according to \cite{ATW} we extend $p^{-1}(0)$ by $$N:= p^{-1}(0)\oplus T$$ We still have an isolated intersection $N\cap A=\{0\}$, since $p^{-1}(0)\cap A=\{0\}$. 

For the generic point $u\in{\C}^k$ near zero we have $\mathrm{deg}_0 A$ points in the fibre $\pi^{-1}(u)\cap p(A)$. Since $\pi(\mathrm{Sng} p(A))$ as well as $\pi(\sigma)$ are both nowheredense (since $\pi(\Sigma)$ is such), we may assume that $\pi^{-1}(u)\cap p(A)$ is contained in $\mathrm{Reg} p(A)\setminus\sigma$. Therefore, each of the points in the fibre considered splits up into $\tilde{m}_0(p|_A)$ points in the pre-image by $p|_A$. This shows that $m_0(\pi\circ p|_A)=\tilde{m}_0(p|_A)\cdot \mathrm{deg}_0 p(A)$. 

On the other hand, $\pi\circ p$ is just the natural projection along $N$. The intersection $N\cap A$ being proper we obtain $m_0(\pi\circ p|A)=i(N\cdot A;0)$. It remains to prove that $i(N\cdot A;0)=i(p^{-1}(0)\cdot A;0)$. By \cite{ATW} Theorem 4.4 this is true if only 
$$
C_0(A,p^{-1}(0))\cap N=p^{-1}(0).
$$
By definition we have $C_0(A,p^{-1}(0))=C_0(A-p^{-1}(0))$. But $p^{-1}(0)$ is linear, so $A-p^{-1}(0)=A+p^{-1}(0)=p(A)+p^{-1}(0)$ and the latter is just ${\C}^m\times p(A)$. Thus
$$
C_0(A,p^{-1}(0))={\C}^m\times C_0(p(A))
$$
and by the choice of $N$ the proof is accomplished.
\end{proof}
\begin{rem}
In the Theorem we do not need $A$ to be irreducible. As it follows from the proof, this assumption can be replaced by the assumption that $p(A)$ is irreducible.
\end{rem}
\begin{cor}
Theorem \ref{wzor} implies Theorem \ref{Spodzieja}.
\end{cor}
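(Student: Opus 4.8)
The plan is to obtain Spodzieja's formula $(S)$ as the special case of Theorem~\ref{wzor} in which the analytic set $A$ is taken to be the graph $\Gamma_f$ and $p$ is the projection ${\C}^m\times{\C}^n\ni(x,y)\mapsto y$. Once this choice is made, proving the corollary amounts to checking that the hypotheses of Theorem~\ref{wzor} are met and that the three quantities appearing in it---the intersection index, the regular multiplicity, and the local degree---coincide one by one with their counterparts in $(S)$.

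First I would record the structural facts about $A:=\Gamma_f\subset D\times{\C}^n\subset{\C}^m\times{\C}^n$. Since $f$ is holomorphic on the domain $D$, the graph map $\gamma\colon D\ni x\mapsto(x,f(x))\in\Gamma_f$ is a biholomorphism, so $\Gamma_f$ is an irreducible, purely $m$-dimensional locally analytic set (here $k=m$). As $f$ is a branched covering it is proper, whence $p|_{\Gamma_f}$ is proper and $p(\Gamma_f)=f(D)$. The hypothesis $p^{-1}(0)\cap\Gamma_f=\{0\}$ is immediate: a point $(x,f(x))$ lies in $p^{-1}(0)$ exactly when $f(x)=0$, and $f^{-1}(0)=\{0\}$. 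Finally $p^{-1}(0)={\C}^m\times\{0\}^n$ restricts, near the origin, to $D\times\{0\}^n$; since the intersection index is a local invariant, $i(p^{-1}(0)\cdot\Gamma_f;0)=i(\Gamma_f\cdot(D\times\{0\}^n);0)=i_0(f)$, and likewise $\mathrm{deg}_0 p(\Gamma_f)=\mathrm{deg}_0 f(D)$.

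The only point that deserves a moment's attention is the identification $\tilde{m}_0(p|_{\Gamma_f})=\tilde{m}_0(f)$. This follows because the biholomorphism $\gamma$ intertwines the two maps, $p|_{\Gamma_f}\circ\gamma=f$, so the fibres correspond bijectively, $(p|_{\Gamma_f})^{-1}(y)=\gamma\big(f^{-1}(y)\big)$, and $\mathrm{Reg}\,p(\Gamma_f)=\mathrm{Reg}\,f(D)$. Consequently the $\limsup$ over $\mathrm{Reg}\,p(\Gamma_f)\ni y\to0$ of $\#(p|_{\Gamma_f})^{-1}(y)$ equals the $\limsup$ over $\mathrm{Reg}\,f(D)\ni y\to0$ of $\#f^{-1}(y)$, which is exactly $\tilde{m}_0(f)$. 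With every ingredient matched, substituting into the conclusion of Theorem~\ref{wzor} yields
$$
i_0(f)=i(p^{-1}(0)\cdot\Gamma_f;0)=\tilde{m}_0(p|_{\Gamma_f})\cdot\mathrm{deg}_0 p(\Gamma_f)=\tilde{m}_0(f)\cdot\mathrm{deg}_0 f(D),
$$
which is precisely $(S)$. Thus there is no genuine obstacle here: the whole content of the corollary lies in the dictionary between $f$ and $p|_{\Gamma_f}$ furnished by the graph parametrization, and the only work is the regular-multiplicity identification verified above.
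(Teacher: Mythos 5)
Your proof is correct and follows exactly the paper's argument: apply Theorem~\ref{wzor} to $A:=\Gamma_f$ with $p$ the projection onto ${\C}^n$, noting $f(D)=p(\Gamma_f)$, $\tilde{m}_0(p|_{\Gamma_f})=\tilde{m}_0(f)$ and $i_0(f)=i(p^{-1}(0)\cdot\Gamma_f;0)$. The paper states these identifications as obvious; you have merely spelled out the verifications (graph parametrization, properness, the fibre correspondence) in more detail, which is fine.
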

\begin{proof}
It is enough to apply Theorem \ref{wzor} to $A:=\Gamma_f$ and $p\colon D\times{\C}^n\to {\C}^n$, since we obviously have $f(D)=p(\Gamma_f)$, $\tilde{m}_0(p|_A)=\tilde{m}_0(f)$ and $i_0(f)=i(p^{-1}(0)\cdot \Gamma_f;0)$.
\end{proof}

In view of the pull-back problem, the following natural question arises. 
Suppose that $F\colon D\to U$ is a holomorphic branched covering between domains $U,V\subset{\C}^m$. We know that in such a case, the critical locus $\sigma\subset U$ coincides with the set of critical points $F(\{\mathrm{Jac} F=0\})$ and so by the Remmert Proper Mapping Theorem, is a hypersurface (\footnote{Note that since for the generic $x\in D$, $m_x(F)=1$, then $\mathrm{Jac} F\not\equiv 0$.}). Suppose that $V\subset D$ is an irreducible analytic set such that $V\cap \{\mathrm{Jac} F=0\}$ is nowheredense in $V$. Does it follow that $F^{-1}(\sigma)\cap V$ is nowheredense in $V$? 

We can ask the same question in a slightly more general setting. Namely, let the projection $\pi\colon A\to U$ be a branched covering over the domain $U\subset{\C}^k$ where $A$ is pure $k$-dimensional and let $\sigma\subset U$ be the critical locus. Put $\Sigma$ for the branching locus of $\pi$, i.e. $\mathrm{Sng} A$ together with those regular points at which $\pi$ is not surjective. If $V\subset A$ is an irreducible analytic set such that $V\cap \Sigma$ is nowheredense in $V$, does it follow that $V\cap \pi^{-1}(\sigma)$ is again nowheredense in $V$?

It is immediately clear that the answer to the second question is negative. Take for instance $A=\{x^2=y^2\}\cup \{y=1\}$ with $\pi(x,y)=x$ and $V=\{(0,1)\}$. Then over the unit disc $U$ we have $\sigma=\{0\}$, $\Sigma=\{(0,0)\}$ and $\pi^{-1}(\sigma)=\Sigma\cup V$.

At the same time this suggests a counter-example to the first question. Namely, let $F(x,y)=(x^2y,x+y)$. Since $F^{-1}(0)=\{(0,0)\}$, $F$ is proper in a neighboourhood of zero. We have $\mathrm{Jac} F(x,y)=(2x-y)x$, whence $\sigma=\{(0,y)\colon y\}\cup\{(4y^3,3y)\colon y\}$. Then take $V=\{(x,0)\colon x\}$. Clearly, $V\cap \{\mathrm{Jac} F=0\}=\{(0,0)\}$ but $V\subset F^{-1}(\sigma)$.

\section{Proof of Theorem \ref{pullback} via analytic intersection theory}

 We assume that $F\colon({\C}_z^m,0)\to({\C}_w^m,0)$ is a germ of a holomorphic finite mapping and $W\subset{\C}^m_w$ is an analytic germ at zero such that the pull-back $V:=F^{-1}(W)$ is a smooth germ (and hence irreducible). 

We make the following simplifications (one may find necessary details in e.g. \cite{Ch}):
\begin{enumerate}
\item There is a representant $F\colon D\to U$ defined on a domain $D\subset{\C}^m$, with $F^{-1}(0)=\{0\}$ and such that $F$ is a branched covering of the domain $U\subset{\C}^m$. Both $D$ and $U$ are arbitrarily small (our problem is local, shrinking the domains does not affect the multiplicity $\mu:=m_0(F)$ which is the sheet-number).

\item We may assume that $W$ is an analytic subset of $U$, whence $V$ is analytic in $D$. We posit that $V$ is smooth and irreducible.

\item Of course $F(V)=W$. The germ $V$ being irreducible, so is $W$ (it is even normal, see \cite{L}). Let $k=\dim V=\dim W$ (the dimension is pure). By a change of variables (and shrinking the domains), we may suppose that $V=({\C}^k\times\{0\}^{m-k})\cap D$. 

\item Moreover, we may assume that $U$ is of the form $U'\times U''\subset{\C}^k_{w'}\times{\C}^{m-k}_{w''}$ and the projection $\pi(w',w'')=w'$ realizes the local degree (Lelong number) $d:=\mathrm{deg}_0 W$. In particular, we must have $\pi^{-1}(0)\cap C_0(W)=\{0\}$.
\end{enumerate}

Observe one nice property:
\begin{lem}\label{np}
For any set $E$ such that $E\cap V=\{0\}$, one has $F(E)\cap W=\{0\}$.
\end{lem}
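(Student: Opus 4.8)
The plan is to argue purely set-theoretically, exploiting the one hypothesis that the introduction singles out as essential: that $V$ is the \emph{full} pull-back $F^{-1}(W)$, and not merely some smooth germ that happens to map onto $W$. No analytic or geometric input is really needed once this identity is in hand.

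First I would record that $0\in F(E)\cap W$. Indeed, the assumption $E\cap V=\{0\}$ forces $0\in E$, and since $F(0)=0\in W$ we obtain $0=F(0)\in F(E)\cap W$. Thus the lemma reduces to the reverse inclusion $F(E)\cap W\subset\{0\}$.

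For this, take an arbitrary point $w\in F(E)\cap W$ and write $w=F(e)$ with $e\in E$. The crucial step is that $w\in W$ together with the identity $V=F^{-1}(W)$ gives $e\in F^{-1}(W)=V$. Combined with $e\in E$, this yields $e\in E\cap V=\{0\}$, so that $e=0$ and hence $w=F(0)=0$. Therefore $F(E)\cap W=\{0\}$, as claimed.

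There is no genuine obstacle here; the entire content of the statement sits in the hypothesis $V=F^{-1}(W)$. It is worth emphasizing \emph{where} this plainness is used: were the pull-back not plain, say $V\subsetneq F^{-1}(W)$ as in the example devised with Carlo Perrone, the argument would break down at precisely the implication $e\in F^{-1}(W)\Rightarrow e\in V$, since $e$ could then lie in $F^{-1}(W)\setminus V$ while still belonging to $E$, producing a nonzero $w\in F(E)\cap W$. This is exactly the point at which the hypothesis that $V$ is a plain pull-back enters, and it is what makes the subsequent intersection-theoretic computation go through.
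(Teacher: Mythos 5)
Your proof is correct and follows essentially the same route as the paper: take $w=F(e)\in F(E)\cap W$, use the plain pull-back identity $V=F^{-1}(W)$ to conclude $e\in E\cap V=\{0\}$, hence $w=0$. The only difference is cosmetic (the paper phrases the key step as $F^{-1}(F(e))\subset V$, you write $e\in F^{-1}(W)=V$ directly, and you additionally note the trivial inclusion $0\in F(E)\cap W$), so there is nothing to change.
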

\begin{proof}Let $w\in F(E)\cap W$. Then there exists a point $e\in E$ such that $F(e)\in W$ and so $F^{-1}(F(e))\subset V$. In particular $e\in E\cap V=\{0\}$ and so $0=F(e)=w$.\end{proof}

\medskip
Our aim is to prove that $0\in\mathrm{Reg} W$ which means exactly that $d=1$. 

\medskip
Let $f:=F|_V$. It is a finite holomorphic mapping taking values in $W$. Now, observe that $W$ being irreducible, $\mathrm{Reg} W$ is connected, and so $f$ has a well-defined multiplicity $\lambda$ as a branched covering when restricted to $V':=V\setminus f^{-1}(\mathrm{Sng} W)$. Moreover, since $f^{-1}(0)=\{0\}$, by Lemma \ref{wlokno}, we conclude that the regular geometric multiplicity $\tilde{m}_0(f)$ coincides with $\lambda$. 

By Theorem \ref{wzor}, we have 
$$
d\cdot \lambda=i(\Gamma_f\cdot (D\times\{0\}^m);0)=i(\Gamma_f\cdot(V\times\{0\}^m);0)
$$
since $f$ can also be treated as a map ${\C}^k\supset V\to {\C}^m$.

Now, we will prove a counter-part of Lemma 4.2 from \cite{L} in two steps. Let us introduce the \textit{generic multiplicity of $F$ along} $V$ as
$$
m_V(F):=\min\{m_x(F)\mid x\in V\}.
$$
This makes sense for any holomorphic branched covering $F$ over a connected manifold and an irreducible analytic subset $V$ of the domain that can be a pure dimensional analytic set. 
First a simple general observation already made in \cite{L} Lemma 4.2.
\begin{lem}\label{set}
Let $\pi\colon A\to U$ be a holomorphic branched covering where $A$ is a pure $m$-dimensional analytic set and $U$ a domain in ${\C}^m$. Then for any analytic irreducible subset $V\subset A$, there exists an analytic nowheredense set $S\subsetneq V$ such that for all $x\in V\setminus S$, $m_x(\pi)=m_V(\pi)$.
\end{lem}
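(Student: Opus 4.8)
The plan is to realise $S$ as the locus in $V$ where the local multiplicity strictly exceeds its minimal value on $V$, and to show that this locus is analytic and proper. Since $\pi$ is a branched covering of some finite sheet-number $\mu$, we have $1\le m_x(\pi)\le\mu$ for every $x\in A$, so the set of values $\{m_x(\pi)\mid x\in V\}$ is a non-empty subset of $\{1,\dots,\mu\}$, and the integer $p:=m_V(\pi)=\min\{m_x(\pi)\mid x\in V\}$ is well defined and attained at some $x_0\in V$.

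The decisive ingredient I would invoke is the upper semicontinuity of the multiplicity of a finite holomorphic map, in the strong form asserting that for every integer $c$ the super-level set
$$
A_{\ge c}:=\{x\in A\mid m_x(\pi)\ge c\}
$$
is an analytic subset of $A$; this is exactly the observation recorded in \cite{L} Lemma 4.2. Granting it, I would put $S:=V\cap A_{\ge p+1}$. Then $S$ is analytic in $V$, and it is a proper subset because $m_{x_0}(\pi)=p$ forces $x_0\notin S$, so $S\subsetneq V$. As $V$ is irreducible, the proper analytic subset $S$ has dimension strictly smaller than $\dim V$ and is therefore nowheredense in $V$. Finally, for $x\in V\setminus S$ we have $m_x(\pi)\le p$ (since $x\notin A_{\ge p+1}$) together with $m_x(\pi)\ge p$ (since $p$ is the minimum over $V$), whence $m_x(\pi)=p=m_V(\pi)$, which is precisely the assertion.

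The only genuinely non-formal point — and the step I expect to be the main obstacle — is the analyticity of the sets $A_{\ge c}$; everything else is just bookkeeping with irreducibility. For $c=2$ this set is transparent: since $m_x(\pi)$ dominates the local multiplicity of $A$ at $x$ (which exceeds $1$ exactly at singular points), $A_{\ge 2}$ is $\mathrm{Sng}A$ together with the ramification locus of $\pi|_{\mathrm{Reg}A}$, i.e. the zero set of the corresponding Jacobian, hence analytic. For larger $c$ the statement is the standard upper semicontinuity of the multiplicity of finite holomorphic maps (see \cite{Ch}), which one can establish by a fibre-product or normalisation argument; since it is already available in \cite{L}, in the write-up I would simply cite it rather than reproduce the proof.
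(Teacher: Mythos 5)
Your proof is correct and takes essentially the same route as the paper's: both arguments rest on the analyticity of the super-level sets $A_s=\{x\in A\mid m_x(\pi)\geq s\}$ (the paper cites 10.1 Lemma 1 of \cite{Ch} for this, the same fact underlying \cite{L} Lemma 4.2) and then use irreducibility of $V$ to conclude that the proper analytic subset $S=V\cap A_{p+1}$, with $p=m_V(\pi)$, is nowheredense in $V$. Your explicit choice of the minimum $p$ attained at some $x_0$ is just a rephrasing of the paper's appeal to the finite descending chain $A_1\supset\cdots\supset A_\mu\supset A_{\mu+1}=\varnothing$ together with the identity principle for analytic sets.
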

\begin{proof}
Since $A$ is pure dimensional, by 10.1 Lemma 1 in \cite{Ch} we know that the sets $A_s:=\{x\in A\mid m_x(\pi)\geq s\}$ are analytic. Then so are the sets $V\cap A_s$ and since $A=A_1\supset\ldots\supset A_\mu\supset A_{\mu+1}=\varnothing$ where $\mu$ is the sheet number, by the identity principle for analytic sets (see \cite{Ch}), we are done.
\end{proof}
\begin{prop}
If $F\colon D\to U$ is a holomorphic branched covering between two domains containing $0\in{\C}^m$, $F^{-1}(0)=\{0\}$ and $V\subset D$ is a analytic set irreducible at zero, then assuming that $F^{-1}(F(V))=V$, we have $$m_0(F)=\tilde{m}_0(F|_V)\cdot m_V(F).$$
\end{prop}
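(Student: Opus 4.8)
The plan is to compute the sheet number $\mu:=m_0(F)$ by inspecting a single generic fibre lying over $\mathrm{Reg} W$, exploiting the fact that, because of the pull-back hypothesis, every such fibre is entirely contained in $V$. Write $f:=F|_V$ and $W:=F(V)$, and recall that $m_0(F)$ is precisely the covering number of $F\colon D\to U$. The first thing I would record is that for every $w\in W$ one has $F^{-1}(w)\subset V$: indeed $F^{-1}(F(V))=V$, so any preimage of a point of $W=F(V)$ automatically lands in $V$. Consequently $F^{-1}(w)=F^{-1}(w)\cap V=f^{-1}(w)$ for all $w\in W$, i.e. the fibres of $F$ over $W$ and the fibres of $f$ coincide as sets.

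Next I would apply the Stoll formula $(St)$. Since $U$ is a domain it is smooth, so $\mathrm{Reg} U=U$ and $(St)$, applied to $A=\Gamma_F$, yields $\sum_{x\in F^{-1}(w)}m_x(F)=\mu$ for every $w\in U$, in particular for every $w\in W$. I would then pick $w$ generic in $\mathrm{Reg} W$: avoiding $\mathrm{Sng} W$ and the (nowheredense) critical locus of the branched covering $f$ over $\mathrm{Reg} W$, the fibre $f^{-1}(w)$ consists of exactly $\lambda:=\tilde{m}_0(f)$ distinct points, recalling from the discussion preceding the Proposition that $\tilde m_0(f)$ equals this covering number.

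It remains to show that, for generic $w$, each of these $\lambda$ preimages carries local multiplicity exactly $m_V(F)$. Here I would invoke Lemma \ref{set}: there is a nowheredense analytic set $S\subsetneq V$ with $m_x(F)=m_V(F)$ for all $x\in V\setminus S$. As $f$ is finite, $f(S)$ is an analytic subset of $W$ with $\dim f(S)=\dim S<\dim V=\dim W$, hence nowheredense in $W$. Choosing $w$ additionally outside $f(S)$ forces every point of $f^{-1}(w)$ to lie in $V\setminus S$, so each contributes $m_x(F)=m_V(F)$ to the sum. Combining the two counts,
$$
\mu=\sum_{x\in F^{-1}(w)}m_x(F)=\sum_{x\in f^{-1}(w)}m_x(F)=\lambda\cdot m_V(F),
$$
which is exactly $m_0(F)=\tilde{m}_0(F|_V)\cdot m_V(F)$.

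The applications of $(St)$ and of Lemma \ref{set} are routine; the point that needs genuine care is the simultaneous genericity of $w$. I must make sure a single $w$ can be chosen avoiding all three nowheredense sets at once — $\mathrm{Sng} W$, the critical locus of $f$ over $\mathrm{Reg} W$, and $f(S)$ — and that the identity $F^{-1}(w)=f^{-1}(w)$ really does exclude stray preimages of $w$ outside $V$ that would otherwise corrupt both the fibre count and the multiplicity sum. This last point is where the hypothesis $F^{-1}(F(V))=V$, rather than merely $F(V)=W$, is indispensable.
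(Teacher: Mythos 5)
Your proposal is correct and follows essentially the same route as the paper's own proof: identify $\tilde{m}_0(F|_V)$ with the covering number of $F|_V$ over $\mathrm{Reg}\,F(V)$, use the pull-back hypothesis to place the entire fibre $F^{-1}(w)$ inside $V$, take $S$ from Lemma \ref{set}, and conclude by Stoll's formula $(St)$. If anything, your choice of the generic point downstairs, avoiding $f(S)$ (nowheredense since $f$ is finite), is slightly more careful than the paper, which picks a single generic $x\in V\setminus S$ and tacitly assumes the whole fibre $F^{-1}(F(x))$ misses $S$ --- exactly the point your argument makes explicit.
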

\begin{proof}
Once again using Lemma \ref{wlokno} we see that $\tilde{m}_0(F)$ is the covering number of $F|_{V\setminus F^{-1}(\mathrm{Sng} V)}$ over $\mathrm{Reg} V$. Of course, $m_0(F)$ is the covering number of $F$.

Let $S\subsetneq V$ be the set from the previous Lemma. Then there exists a point $x\in V$ such that $x\notin S$, $F(x)\in\mathrm{Reg}F(V)$ but it does not belong to the critical set of $F|_{V\setminus F^{-1}(\mathrm{Sng}F(V))}$. Since $F^{-1}(F(x))\subset V$ and since it consists of exactly $\tilde{m}_0(F)$ points $z$ such that for each of them, $m_{z}(F)=m_V(F)$, we conclude by the known Stoll formula $(St)$ mentioned earlier:
$$
m_0(F)=\sum_{z\in F^(F(x))}m_z(F)=\tilde{m}_0(F|_V)\cdot m_V(F)
$$
as required.
\end{proof}

Let us come back to our considerations. Write $\varkappa:=m_V(F)$. The formula obtained above yields
$$
\mu=\lambda\cdot \varkappa.
$$
Observe that $\mu=m_0(F)=i(\Gamma_F\cdot (D\times\{0\}^m);0)$ is the proper intersection index. Of course, $\Gamma_F\supset\Gamma_f$ and by \cite{ATW} Proposition 5.3 together with the definitions of \cite{ATW} Section 4, we obtain
$$
i(\Gamma_F\cdot (D\times\{0\}^m);0)\geq i(\Gamma_f\cdot (D\times\{0\}^m);0).
$$
Note that here the fact that $\Gamma_f$ is smooth is essential.

Now we need only the following lemma.
\begin{lem}
If $F\colon D\to U$ is a holomorphic branched covering between two domains containing $0\in{\C}^m$, $F^{-1}(0)=\{0\}$ and $V\subset D$ is an irreducible analytic set, then 
$$
m_V(F)=1\ \Leftrightarrow\ \mathrm{Jac} F|_V\not\equiv 0.
$$
\end{lem}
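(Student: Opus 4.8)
The plan is to prove the equivalence $m_V(F)=1 \Leftrightarrow \mathrm{Jac}\, F|_V\not\equiv 0$ by relating the generic multiplicity of the branched covering $F$ along $V$ to the non-vanishing of the Jacobian on $V$. The key observation is that $m_V(F)$ equals the generic fibre-count of $F$ over points of $F(V)$ reached from $V$, and that the multiplicity $m_x(F)$ of a branched covering at a point equals $1$ precisely when $x$ is a regular point of the covering, i.e. when $F$ is locally biholomorphic near $x$. The crucial link is the classical fact that for a holomorphic branched covering $F\colon D\to U$ between equidimensional domains with $F^{-1}(0)=\{0\}$, the critical set $\{\mathrm{Jac}\, F=0\}$ is exactly the set of points where the local multiplicity exceeds one; this is already implicit in the footnote earlier in the paper noting that $m_x(F)=1$ for generic $x$ forces $\mathrm{Jac}\, F\not\equiv 0$.

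First I would handle the forward direction. Suppose $m_V(F)=1$. By Lemma \ref{set}, there is a nowheredense analytic set $S\subsetneq V$ such that $m_x(F)=m_V(F)=1$ for every $x\in V\setminus S$. At any such point $x$, having local multiplicity one means $F$ is locally biholomorphic at $x$, hence $\mathrm{Jac}\, F(x)\neq 0$. Since $V\setminus S$ is dense in $V$ and $V$ is irreducible, the restriction $\mathrm{Jac}\, F|_V$ cannot vanish identically (it is non-zero on a dense subset), giving $\mathrm{Jac}\, F|_V\not\equiv 0$.

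For the converse I would argue contrapositively: assume $m_V(F)\geq 2$ and show $\mathrm{Jac}\, F|_V\equiv 0$. By definition $m_V(F)=\min\{m_x(F)\mid x\in V\}$, so $m_x(F)\geq 2$ for every $x\in V$. But the local multiplicity of a branched covering is $\geq 2$ at $x$ exactly when $F$ fails to be a local biholomorphism at $x$, equivalently when $x$ lies in the critical set $\{\mathrm{Jac}\, F=0\}$. Hence every point of $V$ satisfies $\mathrm{Jac}\, F(x)=0$, that is $\mathrm{Jac}\, F|_V\equiv 0$. Combining both directions yields the stated equivalence.

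The main obstacle I expect is making precise and justifying the pointwise dictionary between the geometric multiplicity $m_x(F)$ of the branched covering and the analytic condition $\mathrm{Jac}\, F(x)=0$. For an equidimensional finite holomorphic map this is standard—$m_x(F)$ is the degree of the local covering at $x$, and this degree is one if and only if $F$ is a local biholomorphism near $x$, which by the inverse function theorem is equivalent to $\mathrm{Jac}\, F(x)\neq 0$—but one must be careful that $m_x(F)$ here denotes the honest geometric multiplicity (number of points in a nearby generic fibre localized at $x$) rather than the regular multiplicity $\tilde m_x$, and that the critical locus $\{\mathrm{Jac}\, F=0\}$ coincides with the branch locus. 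This identification is exactly the content of the footnote cited earlier in the paper, so I would invoke \cite{Ch} for the equivalence $m_x(F)=1\Leftrightarrow \mathrm{Jac}\, F(x)\neq 0$ and keep the argument short.
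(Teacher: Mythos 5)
Your proof is correct and follows essentially the same route as the paper: the forward direction via Lemma \ref{set} and local invertibility at points of $V\setminus S$ is identical, and your contrapositive converse is just a rephrasing of the paper's argument that $\mathrm{Jac}\,F(x)\neq 0$ at some $x\in V$ forces $m_x(F)=1$ and hence $m_V(F)=1$. The pointwise dictionary $m_x(F)=1\Leftrightarrow\mathrm{Jac}\,F(x)\neq 0$ that you defer to \cite{Ch} is exactly what the paper also uses implicitly, so no gap remains.
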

\begin{proof}
As earlier let $S\subsetneq V$ be the set from Lemma \ref{set}. Now, if $m_x(F)=1$ along $V\setminus S$, it means that $F$ is locally invertible at the points of $V\setminus S$ and so its Jacobian cannot vanish there. 

On the other hand, if the set $Z:=\{x\in V\mid \mathrm{Jac} F(x)= 0\}$ does not coincide with $V$, it is nowheredense in $V$ (since $V$ is irreducible). But then for any $x\in V\setminus Z$, $F$ is invertible at $x$, and so there must be $m_x(F)=1$. Whence $m_V(F)=1$.
\end{proof}

Eventually, from all the preceding discussion we obtain
\begin{align*}
\tilde{m}_0(f)\cdot m_V(F)&=\lambda\cdot\varkappa=\\
&=\mu=m_0(F)=i(\Gamma_F\cdot (D\times\{0\}^m);0)\geq\\
&\geq i(\Gamma_f\cdot (D\times\{0\}^m);0)=\lambda\cdot d=\\
&=\tilde{m}_0(f)\cdot\mathrm{deg}_0 W.
\end{align*}
In Greek lettering, 
$$
\lambda\cdot\varkappa\geq \lambda\cdot d,
$$
whence $\varkappa\geq d$. But under the assumptions of Theorem \ref{pullback} we have, by the last Lemma, $\varkappa=1$ which ends the proof of this Theorem.

\section{Acknowledgements}

There is a very long list of people whom the pull-back problem was discussed with. First we should thank the late Marco Brunella 
who showed great interest in the matter. Then in chronological order we acknowledge our indebtedness to Lucy Moser-Jauslin, Carlo Perrone and Ewa Cygan. 

The author is grateful to Stanis\l aw Spodzieja for his remarks on formula $(S)$.

During the preparation of this note, the author was partially supported by Polish Ministry of Science and Higher Education grant  IP2011 009571.

\end{document}